\documentclass[final,3p,times]{elsarticle}
    \makeatletter
    \def\ps@pprintTitle{%
       \let\@oddhead\@empty
       \let\@evenhead\@empty
       \let\@oddfoot\@empty
       \let\@evenfoot\@oddfoot
    }
    \makeatother

\usepackage{amssymb}
\usepackage{amsmath}

\usepackage{lineno}
\usepackage{mathtools}
\usepackage{float}

\newtheorem{theorem}{Theorem}
\newtheorem{lemma}[theorem]{Lemma}
\newdefinition{remark}{Remark}
\newproof{proof}{Proof}
\newdefinition{algo}{Algorithm}
\newdefinition{assumption}{Assumption}

\newcommand{\nr}[1]{\left\|{#1}\right\|}
\newcommand{\R}{\mathbb{R}}
\DeclareMathOperator{\argmin}{argmin}
\newcommand{\bigo}{\mathcal{O}}
\newcommand{\goto}{\rightarrow}

\journal{Applied Math Letters}

\begin{document}
\begin{frontmatter}



\title{Fast convergence to higher multiplicity zeros}


\author{Sara Pollock}

\address{Department of Mathematics,
University of Florida,
Gainesville, FL, 32611}

\begin{abstract}
In this paper, the Newton-Anderson method, which results from applying an 
extrapolation technique known as Anderson acceleration to Newton's method,
is shown both analytically and numerically
to provide superlinear convergence to non-simple roots of scalar equations.
The method requires neither a priori knowledge of the multiplicities of the roots, 
nor computation of any additional function evaluations or derivatives.
\end{abstract}

\begin{keyword}
Rootfinding \sep
nonlinear acceleration \sep
non-simple roots \sep
Newton's method \sep 
Anderson acceleration
\MSC[2010]{65B05,65H04}
\end{keyword}

\end{frontmatter}
%

\section{Introduction.}
Solving nonlinear equations is a problem of fundamental importance in numerical 
analysis, and across many areas of science, engineering, finance and mathematics.
In general, solving nonlinear equations is an iterative process, accomplished by
generating a sequence of approximations to the solution. 
One of the most common methods of obtaining a solution to the nonlinear 
problem $f(x) = 0$ is Newton's method, in which the sequence of approximations to
a zero of $f$ is generated, given some initial $x_0$, by
\begin{align}\label{eqn:newton}
x_{k+1} = x_k - [f'(x_k)]^{-1}f(x_k).
\end{align}

The purpose of this manuscript is to introduce for functions
$f: \mathbb R \rightarrow \mathbb R$, the sequence  where given $x_0$, 
$x_1$ is found by \eqref{eqn:newton}, then for $k \ge 1$, $x_{k+1}$ is
generated by
\begin{align}\label{eqn:na-iter}
        x_{k+1}  = x_k - \frac{x_k - x_{k-1}}
        {f(x_k)/f'(x_k)-f(x_{k-1})/f'(x_{k-1})} 
        (f(x_k)/f'(x_k)).
\end{align}
It will be shown that the iterative scheme \eqref{eqn:na-iter} 
gives fast (superlinear) convergence to roots of 
multiplicity $p > 1$, where the Newton method gives only slower linear convergence. 
It will also be shown how this sequence is the result 
of applying an extrapolation method known as Anderson acceleration \cite{anderson65}
to the Newton iteration \eqref{eqn:newton}.

Newton's method is well-known for its quadratic convergence to simple zeros, supposing 
the iteration is started close enough to some root of a function $f$.  
However, some problems may have non-simple (higher multiplicity) roots.
For a root $c$ of multiplicity $p > 1$, Newton's method converges only linearly,
and $ \lim_{k \rightarrow \infty }|x_{k+1} -c|/|x_k - c| = 1-1/p$,
\cite[Section 6.3]{Quarteroni06}.  
A modified Newton method 
\begin{align}\label{eqn:modnewton}
x_{k+1} = x_k - p\frac{f(x_k)}{f'(x_k)},
\end{align}
can be seen to restore quadratic convergence. However, this requires knowledge of
the multiplicity $p$ of the root which is generally {\em a priori} unknown.

Even if $p$ is unknown, it may be approximated in the course of the iterative process.
One method that does just that is introduced in \cite[Section 6.6.2]{Quarteroni06} 
whereby $x_{k+1} = x_k - p_k f(x_k)/f'(x_k)$ 
gives an approximate or adaptive modified Newton method with $p_0 = 1$ and 
for $k \ge 1$, 
\[
p_k = \frac{x_{k-1} - x_{k-2}}{2x_{k-1}- x_k - x_{k-2}},
\]
where $p_k$ is recomputed on each iteration where the convergence rate is 
sufficiently stable (see \cite[Program 56]{Quarteroni06}).
The method \eqref{eqn:na-iter} introduced here uses a different approximation to 
$p_k$, and will be shown to compare favorably to the adaptive method of 
\cite{Quarteroni06} in the numerical tests of Section \ref{sec:numerics}.

Finally it is remarked that another approach to quadratic convergence for non-simple
roots discussed in for instance \cite{mathews89} is a modified Newton-Raphson method
\[
  x_{k+1} = x_k - \frac{f(x_k)f'(x_k)}{(f'(x_k))^2 - f(x_k)f''(x_k)},
\]
which bears close resemblence to Halley's method \cite{ScTh95}.  
However, the computation of the 
second derivative may be considered unnecessarily laborious as it will be seen 
in Section \ref{sec:numerics} that the superlinear convergence of the method 
\eqref{eqn:na-iter} introduced here converges very nearly as fast as the modified 
Newton method \eqref{eqn:modnewton} but without the {\em a priori} knowledge of 
the multiplicity of the zero.

\section{Anderson accelerating Newton's method.}
To understand the derivation of the method \eqref{eqn:na-iter}, the Anderson 
acceleration algorithm for fixed-point iterations is next introduced.  
This method, which uses a history of the $m+1$ most recent iterates and update steps
to define the next iterate, was introduced by D. G. Anderson in 1965 \cite{anderson65}
in the context of integral equations. It has since increased in popularity and 
become known as an effective method for improving the convergence rate of fixed-point
iterations $x_{k+1} = \phi(x_k)$, and is used in many applications in scientific 
computing \cite{EPRX19,K18,WaNi11}.
The basic Anderson acceleration algorithm with depth $m$ (without damping) 
applied to the fixed-point problem $\phi(x) = x$ for 
$\phi: \mathbb R^n \rightarrow \mathbb R^n$, is shown below. To clarify how it
is applied to a Newton iteration, the following notation is introduced. 
The fixed-point iteration may be written as
$x_{k+1} = \phi(x_k) = x_k + (\phi(x_k)-x_k) = x_k + w_{k+1}$, where
$w_{k+1} = \phi(x_k) - x_k$.  Thus, if $\phi(x_k) = x_k - [f'(x_k)]^{-1}f(x_k)$ 
as in the Newton iteration \eqref{eqn:newton}, 
the update step is $w_{k+1} = -[f'(x_k)]^{-1}f(x_k)$, 
(or, $w_{k+1} = -f(x_k)/f'(x_k)$, in the special case of $f: \R \goto \R$).

\begin{algo}
\label{alg:anderson} 
{\bf(Anderson iteration with depth $m$)} \quad Set depth $m \ge 0$. Choose $x_0$.
Compute $w_1$. Set $x_1 = x_0 + w_1$.
\\ \noindent
For $k = 1, 2, \ldots$, set $m_k = \min\{k,m\}$
\\ \indent
Compute $w_{k+1}$
\\ \indent
Set $F_k= \begin{pmatrix}(w_{k+1}-w_k) & \ldots & (w_{k-m+2} - w_{k-m+1})\end{pmatrix}$,
and $E_k= \begin{pmatrix}(x_{k}-x_{k-1}) & \ldots & (x_{k-m+1} - x_{k-m})\end{pmatrix}$\\ \indent
Compute $\gamma_{k} = \argmin_{\gamma \in \R^m} \nr{w_{k+1} - F_k \gamma}$
\\ \indent
Set $x_{k+1} = x_k + w_{k+1} - \left(E_k + F_k \right)\gamma_{k}$
\end{algo}
A discussion on what norm might be used for the optimization in 
Algorithm \ref{alg:anderson},
and how the minimization problem is solved, can be found in \cite{WaNi11}. 
In the present context of finding a zero of $f: \mathbb R \rightarrow \mathbb R$,
one only needs consider depth $m=1$ ($m=0$ is the original fixed-point iteration), 
and the optimization step reduces to solving a linear equation.
\subsection{Scalar Newton-Anderson}\label{subsec:an1}
In the scalar case, $f: \mathbb R \rightarrow \mathbb R$,
the optimization problem in Algorithm \ref{alg:anderson} 
reduces to a linear equation for a single coefficient, 
$w_{k+1} - \gamma_k(w_{k+1}- w_k) = 0$,
solved by
$ \gamma_k = w_{k+1}/(w_{k+1}-w_k).$
Then the accelerated iterate $x_{k+1}$, for $k \ge 1$ is given by
\begin{align}\label{eqn:seclike}
x_{k+1} 
= x_k +  w_{k+1} - \frac{w_{k+1}}{w_{k+1}-w_k} ((x_k + w_{k+1} - (x_{k-1} + w_k)) 
= x_k -\frac{x_k - x_{k-1}}{w_{k+1}-w_k} w_{k+1}.
\end{align}
If the fixed-point scheme $x_{k+1} = x_k + f(x_k)$ is used, then $w_{k+1} = f(x_k)$,
which results in the secant method. 
If the Newton method \eqref{eqn:newton} is accelerated, then plugging in 
$w_{k+1} = -f(x_k)/f'(x_k)$ yields \eqref{eqn:na-iter}.  

It is remarked here that for $f: \mathbb R^n \rightarrow \mathbb R^n$, 
(or a more general normed vector space) the 
Anderson Algorithm \ref{alg:anderson} applied to $w_{k+1} = f(x_k)$ results in 
a method shown to be a type of multi-secant method \cite{eyert96,FaSa09}, as 
compared to the standard secant method in the scalar case.
One of the motivations for looking at the scalar version of Algorithm 
\ref{alg:anderson} applied to the Newton method was to understand the method
in this simpler setting to gain insight into its use in a more general setting.
As a result of this investigation, and as demonstrated in \cite{PoSw19}, 
it was found for 
$f:\R^n \rightarrow \R^n$,
the Newton-Anderson method can provide superlinear convergence to solutions of
degenerate problems, those whose Jacobians are singular at a solution (and for which
Newton converges only linearly), as well
as nondegenerate problems (where Newton converges quadratically).
This paper focuses on  $f: \R \rightarrow \R$, 
and provides analytical and numerical results to characterize the scalar case.
Next, the Newton-Anderson rootfinding method is summarized,
then its convergence properties are analyzed in the section that follows.

\begin{algo}
\label{alg:na1}
{\bf(Newton-Anderson rootfinding method)}\\
Choose $x_0$.
Compute $w_1 = -f(x_0)/f'(x_0)$. Set $x_1 = x_0 + w_1$
\\ \noindent
For $k = 1, 2, \ldots$
\\ \indent
Compute $w_{k+1}= -f(x_k)/f'(x_k)$
\[
\text{Set} \quad x_{k+1} = x_k -\frac{x_k - x_{k-1}}{w_{k+1}-w_k}w_{k+1} \hspace{4.25in}
\] 
\end{algo}

\section{Rootfinding.}
To give further insight into the main result, a trivial case is first considered.
The Newton method \eqref{eqn:newton} finds the zero of $f(x) = ax-b$, 
or in monic form $f(x) = x-c$, exactly in one step. 
Similarly, it is easily seen that Algorithm \ref{alg:na1} locates the zero of 
$f(x) = (x-c)^p$ with $p> 0$ and $p \ne 1$ after the first optimization step: that is,
$x_2 = c$, if the operations are performed in exact arithmetic.
The modified Newton method \eqref{eqn:modnewton} has a similar property: assuming
$p$ is known, then given $x_0$, the first iterate $x_1 = x_0 - p(x_0-c)/p = c$.
However, one might also show that for 
$f: \mathbb R^n \rightarrow \mathbb R^n$ defined by 
an $n \times n$ invertible matrix $A$,
and $b \in \mathbb R^n$,
the system of $n$ equations given by $f_i(x) = ((Ax - b)_i)^{p_i}$ for exponents
$p_i > 0,$ is solved after the first full optimization step of Algorithm 
\eqref{alg:anderson} with depth $m$ applied to iteration \eqref{eqn:newton} if
there are exactly $m$ distinct exponents $p_i$, $i = 1, \ldots, n$
(a numerical demonstration of this is shown in \cite{PoSw19}).

Next, a more general scalar problem is considered. Suppose $c$ is a non-simple root
of a function $f(x)$ expressed in the form $f(x) = (x-c)^p g(x)$, $ p > 1$,
for some function $g$
which is assumed not to have a zero (or pole) in some neighborhood of $c$.  
The following lemma shows 
the Newton-Anderson rootfinding method approximates the modified Newton method
\eqref{eqn:modnewton}; and, it makes a precise statement regarding how
$(x_{k+1}-x_k)/w_{k+1}= (x_k-x_{k-1})/(w_{k+1}-w_k),$ 
provides an approximation to the multiplicity of the zero of $f$ at $x=c$.
The theorem that follows provides a local convergence analysis of 
Algorithm \ref{alg:na1}.

An alternative approach to the analysis might be to exploit the interpretation
of \eqref{eqn:seclike} as a secant method used to find the (simple) zero of
$-f(x)/f'(x)$, yielding the usual order of convergence for the secant method,
 $(1 + \sqrt 5)/2$. The results that follow, however, give a direct proof that
the method has an order of convergence of at least $(1+ \sqrt 5)/2$; and, 
show that it gives an accurate approximation to the multiplicity of the root (also 
demonstrated numerically in Section \ref{sec:numerics}), which can be of use if
deflation is used to find additional roots.
To fix some notation for the remainder of this section, let $e(x) = c-x$,
and let $\mathcal I_k = (\min\{x_k, x_{k-1}\}, \max\{x_k,x_{k-1}\})$.

\begin{lemma}\label{lem:papprox}
Let $f(x) = (x-c)^p g(x)$ for $p > 1$ where $g: \mathbb R \rightarrow \mathbb R$ is 
a $C^2$ function for which both $g'(x)/g(x)$ and $g''(x)/g(x)$ are bounded
in an open interval $\mathcal N$ containing $c$.  

Define the constants
\begin{align}\label{eqn:defMg}
M_0 
= \max_{x \in \mathcal N} \frac 1 p \left| \left(\frac{g'(x)}{g(x)}\right)^2 
- \frac{g''(x)}{g(x)}\right|,
~\text{ and }~
M_1 = \max_{x \in \mathcal N} \frac 1 p \left| \frac{g'(x)}{g(x)} \right|.
\end{align}

Then, if $x_{k-1},x_k \in \mathcal N_0 \coloneqq 
\{ x \in \mathcal N: e(x)^2 < M_0^{-1} ~\text{ and }~ |e(x)| < M_1^{-1}\}$, 
the iterate $x_{k+1}$ given
by Algorithm \ref{alg:na1} satisfies
$x_{k+1} = x_k + p_k w_{k+1}$ with
\begin{align}\label{eqn:papprox}
p_k = \left(p - 2 e(\eta_k) \frac{g'(\eta_k)}{g(\eta_k)} 
  + \mathcal O(e(\eta_k)^2)\right),
~\text{ for some }~ \eta_k \in \mathcal I_k.
\end{align}
\end{lemma}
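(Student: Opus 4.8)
The plan is to reduce everything to the scalar function $w(x) = -f(x)/f'(x)$, whose consecutive differences drive the update in Algorithm \ref{alg:na1}. First I would put $w$ in closed form: since $f(x)=(x-c)^p g(x)$ has logarithmic derivative $f'(x)/f(x) = p/(x-c) + g'(x)/g(x)$, writing $h = g'/g$ gives
\[
  w(x) = \frac{-(x-c)}{p + (x-c)h(x)} = \frac{e(x)}{p - e(x)h(x)}.
\]
Although $-f/f'$ is a priori a $0/0$ form at $x=c$, this expression is regular there: because $g\in C^2$ and $g$ is nonvanishing on $\mathcal N$, the denominator $p - e(x)h(x)$ is a $C^1$ function equal to $p$ at $x=c$, so $w$ extends to a $C^1$ function on any subinterval of $\mathcal N$ where that denominator is nonzero, with $w(c)=0$. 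Differentiating and simplifying (e.g.\ via $w = -1/\psi$ with $\psi = p/(x-c)+h$) yields the exact identity
\[
  w'(x) = \frac{-p + e(x)^2 h'(x)}{\bigl(p - e(x)h(x)\bigr)^2}, \qquad -h' = \Bigl(\tfrac{g'}{g}\Bigr)^2 - \tfrac{g''}{g},
\]
so that $|h|\le pM_1$ and $|h'|\le pM_0$ on $\mathcal N$ by \eqref{eqn:defMg}.

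Next I would note that $\mathcal N_0$, being the intersection of the intervals $\{|e(x)|<M_1^{-1}\}$, $\{e(x)^2<M_0^{-1}\}$ and $\mathcal N$, is itself an interval, so $x_{k-1},x_k\in\mathcal N_0$ forces $\overline{\mathcal I_k}\subset\mathcal N_0$. On $\mathcal N_0$ the bound $|e(x)h(x)| \le |e(x)|\,pM_1 < p$ keeps the denominator of $w'$ strictly positive, and $|e(x)^2 h'(x)| \le e(x)^2 pM_0 < p$ keeps its numerator strictly negative; hence $w$ is $C^1$ with $w'<0$ throughout $\mathcal I_k$, and in particular $w(x_k)\ne w(x_{k-1})$. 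Then I would invoke Cauchy's mean value theorem: from Algorithm \ref{alg:na1}, $x_{k+1}=x_k+p_k w_{k+1}$ with
\[
  p_k = -\frac{x_k-x_{k-1}}{w_{k+1}-w_k} = \frac{e(x_k)-e(x_{k-1})}{w(x_k)-w(x_{k-1})},
\]
using $x_{k-1}-x_k = e(x_k)-e(x_{k-1})$ and $w_{j+1}=w(x_j)$. Applying the theorem to $e$ and $w$ on $\overline{\mathcal I_k}$ produces $\eta_k\in\mathcal I_k$ with $p_k = e'(\eta_k)/w'(\eta_k) = -1/w'(\eta_k)$, hence
\[
  p_k = \frac{\bigl(p - e(\eta_k)h(\eta_k)\bigr)^2}{p - e(\eta_k)^2 h'(\eta_k)}.
\]

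Finally I would expand this exact expression: subtracting $p$ and then $-2e(\eta_k)h(\eta_k)$ and clearing the common denominator gives
\[
  p_k - p + 2e(\eta_k)h(\eta_k) = \frac{e(\eta_k)^2\bigl(h(\eta_k)^2 + p\,h'(\eta_k) - 2e(\eta_k)h(\eta_k)h'(\eta_k)\bigr)}{p - e(\eta_k)^2 h'(\eta_k)}.
\]
On $\mathcal N_0$ the numerator bracket is bounded by a constant depending only on $p,M_0,M_1$ (using $|h|\le pM_1$, $|h'|\le pM_0$, $|e|<M_1^{-1}$), while the denominator is bounded below by $p(1-e(\eta_k)^2 M_0)>0$; therefore the right-hand side is $\mathcal O(e(\eta_k)^2)$ as $e(\eta_k)\to 0$. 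Since $h = g'/g$, this is exactly \eqref{eqn:papprox}.

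The step I expect to be the main obstacle is the middle one: ensuring the hypotheses of Cauchy's mean value theorem hold on $\mathcal I_k$. The two distinct smallness conditions defining $\mathcal N_0$ — one weighted by $M_1$, one by $M_0$ — are precisely what is needed to keep, respectively, the denominator and the numerator of the exact formula for $w'$ from changing sign, so that $w$ is genuinely $C^1$ there with $w'\ne 0$ and the mean value point $\eta_k$ is guaranteed to lie in $\mathcal I_k$. Once the closed forms for $w$ and $w'$ are in hand, everything else is routine algebra.
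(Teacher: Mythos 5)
Your proposal is correct and follows essentially the same route as the paper: write $w=-f/f'$ in closed form in terms of $h=g'/g$, use the mean value theorem on $\mathcal I_k$ to get $p_k=-1/w'(\eta_k)$, compute $-1/w'$ exactly, and expand to obtain $p-2e(\eta_k)h(\eta_k)+\mathcal O(e(\eta_k)^2)$. The only differences are cosmetic — you reach $w'$ via the logarithmic derivative $f'/f$ rather than via explicit formulas for $f'$ and $f''$, and you control the remainder by exact algebra instead of a geometric series expansion of the denominator.
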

The hypotheses on $g$ maintain that $g$ is reasonably smooth
and does not have a zero in the vicinity of $c$.

\begin{proof}
The Newton update step is $w_k = -f(x_{k-1})/f'(x_{k-1})$ so writing
$w_k = w(x_{k-1})$, the update step 
from Algorithm \ref{alg:na1} reads as
\begin{align}\label{eqn:na1g-001}
x_{k+1} = x_k - \left( \frac{x_k - x_{k-1}}{ w(x_k) - w(x_{k-1})}\right) w_{k+1}
= x_k - p_k w_{k+1},
\end{align}
with $p_k = (x_k - x_{k-1})/(w(x_k) - w(x_{k-1})).$
The aim is now to show $p_k \rightarrow p$ as $e(x_k) \rightarrow 0$.

For $f(x)$ given by $f(x) = (x-c)^p g(x)$, the first two derivatives are given
by
\begin{align}\label{eqn:fderiv}
f'(x) &= (x-c)^{p-1}\big(p g(x) + (x-c) g'(x)\big)
\nonumber \\ 
f''(x) &= (x-c)^{p-2}\big(p(p-1) g(x) + 2 p (x-c)g'(x) + (x-c)^2 g''(x)\big).
\end{align}
Writing $w(x_k)$ in terms of $f(x_k) = (x-c)^p g(x_k)$, and $f'(x_k)$ given by 
\eqref{eqn:fderiv} gives
$w(x_{k}) = e(x_k)g(x_k)/(pg(x_k) - e(x_k)g'(x_k)),$
whose denominator is bounded away from zero for $x_k \in \mathcal N_0$.  
Then by the mean value theorem, there is an $\eta_k \in \mathcal \mathcal I_k$
for which $w(x_k) - w(x_{k-1}) = w'(\eta_k)(x_k - x_{k-1})$, by which 
\eqref{eqn:na1g-001} reduces to 
$x_{k+1} = x_k -  w_{k+1}/w'(\eta_k)$.
Temporarily dropping the subscript on $\eta_k$ for clarity of notation, 
taking the derivative of  $w(\eta) = -f(\eta)/f'(\eta)$ yields
\[
\frac{-1}{w'(\eta)} = \frac{f'(\eta)^2}{ (f'(\eta))^2 - f(\eta)f''(\eta)}.
\]
Applying the expansions of $f'$ and $f''$ from \eqref{eqn:fderiv},
cancelling common factors of $e(x)^{p-2}$ and simplifying allows
\begin{align}\label{eqn:na1g-002}
\frac{-1}{w'(\eta)} 
&= \frac{\big(p g(\eta) - e(\eta)g'(\eta)\big)^2}
{ \big(p g(\eta) - e(\eta)g'(\eta)\big)^2 - 
 g(\eta)\big(p(p-1)g(\eta) - 2pe(\eta)g'(\eta) + e(\eta)^2 g''(\eta)\big) } 
\nonumber \\
& = \frac{p - 2 e(\eta) \frac{g'(\eta)}{g(\eta)} 
+ \frac 1 p e(\eta)^2 \left(\frac{g'(\eta)}{g(\eta)} \right)^2}
{1 + \frac 1 p e(\eta)^2 \left(\left(\frac{g'(\eta)}{g(\eta)}\right)^2 -
\frac{g''(\eta)}{g(\eta)} \right)}.
\end{align}
By hypothesis, $x_{k}$ and $x_{k-1}$ are in $\mathcal N_0$
which implies $\eta_k \in \mathcal I_k \subset \mathcal N_0$, so the 
denominator of the right hand side of \eqref{eqn:na1g-002} is of the form 
$1 + \alpha$ with 
$|\alpha| < 1$. Expanding the denominator in a geometric series shows that
\begin{align}
-\frac{1}{w'(\eta_k)} & = p - 2e(\eta_k)\frac{g'(\eta_k)}{g(\eta_k)} 
+ \mathcal O(e(\eta_k)^2).
\end{align}
This shows there is an $\eta_k \in \mathcal I_k$ for which the update
\eqref{eqn:na1g-001} of Algorithm \ref{alg:na1} satisfies
\eqref{eqn:papprox}.
\qed
\end{proof}
\begin{remark}
The adaptive method of \cite[(6.39)-(6.40)]{Quarteroni06} and the current method
both take the form $x_{k+1} = x_k + p_k w_{k+1}$, so it makes sense to compare
the two expressions for $p_k$.  
Letting $\{y_k\}$ represent the sequence generated by 
\cite[(6.39)-(6.40)]{Quarteroni06}, and setting 
$w_k = -f(y_{k-1})/f'(y_{k-1}),$ the resulting iteration may be written
\begin{align*}
y_{k+1} = y_k + \frac{y_{k-1}-y_{k-2}}{ (y_k - y_{k-1}) - (y_{k-1} - y_{k-2})}w_{k+1}
=y_k + \frac{y_{k-1}-y_{k-2}}{ p_{k-1}w_k - p_{k-2}w_{k-1}} w_{k+1}, 
\end{align*}
which differs from update \eqref{eqn:na1g-001} of Algorithm \ref{alg:na1}
both in terms of the set of iterates used in the numerator of $p_k$: 
$\{y_{k-1},y_{k-2}\}$ compared to $\{x_k,x_{k-1}\}$; and, in the 
form of the denominator $p_{k-1}w_k - p_{k-2}w_{k-1}$ as opposed to 
$w_{k+1}-w_k$.  As such, $p_k$ of the adaptive scheme appears more complicated
to analyze as an approximation to $p$, and the two methods will only be compared 
numerically, in the two examples of Section \ref{sec:numerics}.          
\end{remark}
The previous Lemma \ref{lem:papprox} shows the update step of
Algorithm \ref{alg:na1} is of the form $x_{k+1} = x_k + p_k w_k$ where
$p_k \rightarrow p$ so long as $x_k \rightarrow c$.  The next theorem shows that 
$x_k \rightarrow c$, and that the order of convergence is greater than one
(and, in fact, no worse than $(1 + \sqrt 5)/2$).
\begin{theorem}\label{thm:na1con}
Let $f(x) = (x-c)^p g(x)$, for $p > 1$ where $g: \mathbb R \rightarrow \mathbb R$ is 
a $C^2$ function for which both $g'(x)/g(x)$ and $g''(x)/g(x)$ are bounded
in an open interval $\mathcal N$ containing $c$.  
Define the interval
$\mathcal N_1 = \{ x \in \mathcal N_0  :  |e(x)| <  1/(2M_1)\}$,
where $\mathcal N_0$ and $M_1$ are given in the statement of Lemma \ref{lem:papprox}.
Then there exists an interval $\mathcal N_\ast \subseteq \mathcal N_1$ such that
if $x_{k-1},x_k \in \mathcal N_\ast$, all subsequent iterates remain in
$\mathcal N_\ast$ and the iterates defined by
Algorithm \ref{alg:na1} converge superlinearly to the root $c$.
\end{theorem}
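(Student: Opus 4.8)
The plan is to convert the update of Algorithm \ref{alg:na1}, in the form produced by Lemma \ref{lem:papprox}, into a secant-type error recursion for $\epsilon_k\coloneqq|e(x_k)|$, and then to shrink $\mathcal N_1$ to an interval $\mathcal N_\ast$ on which that recursion is simultaneously a contraction and a self-map. Write $r=g'/g$, bounded on $\mathcal N$ with $\max_{\mathcal N}|r|=pM_1$ by \eqref{eqn:defMg}. Whenever $x_{k-1},x_k\in\mathcal N_0$, Lemma \ref{lem:papprox} gives $x_{k+1}=x_k+p_kw_{k+1}$ with $p_k=p-2e(\eta_k)r(\eta_k)+\mathcal O(e(\eta_k)^2)$ for some $\eta_k\in\mathcal I_k$, so $e(x_{k+1})=e(x_k)-p_kw_{k+1}$. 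Using $w_{k+1}=w(x_k)=e(x_k)/\bigl(p-e(x_k)r(x_k)\bigr)$ (from the expression for $w$ in the proof of Lemma \ref{lem:papprox}), whose denominator is bounded below by $p/2$ on $\mathcal N_1$, a geometric expansion gives $w_{k+1}=\tfrac1p e(x_k)\bigl(1+\tfrac1p e(x_k)r(x_k)+\mathcal O(e(x_k)^2)\bigr)$; substituting this and the expansion of $p_k$ and cancelling the leading $\mathcal O(\epsilon_k)$ terms leaves, with $\mu_k\coloneqq\max\{\epsilon_k,\epsilon_{k-1}\}$ and $|e(\eta_k)|\le\mu_k$ (since $\eta_k\in\mathcal I_k$ and $e$ is affine),
\begin{equation}\label{eqn:errrec2}
e(x_{k+1})=\frac{e(x_k)}{p}\bigl(2e(\eta_k)r(\eta_k)-e(x_k)r(x_k)\bigr)+\mathcal O\bigl(\epsilon_k\mu_k^{2}\bigr),
\end{equation}
and then, using $|2e(\eta_k)r(\eta_k)-e(x_k)r(x_k)|\le3\mu_k\max_{\mathcal N}|r|=3pM_1\mu_k$,
\begin{equation}\label{eqn:keyrec}
\epsilon_{k+1}\le\bigl(3M_1+C\mu_k\bigr)\epsilon_k\mu_k\le\widetilde C\,\epsilon_k\mu_k,
\end{equation}
valid whenever $x_{k-1},x_k\in\mathcal N_1$, with $C,\widetilde C>0$ depending only on $p$, $M_0$, $M_1$.

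Next I would define $\mathcal N_\ast$ and run the induction. Choose $\delta>0$ small enough that $(c-\delta,c+\delta)\subseteq\mathcal N_1$ and $q\coloneqq\widetilde C\delta<1$, and set $\mathcal N_\ast=(c-\delta,c+\delta)$, so $\mathcal N_\ast\subseteq\mathcal N_1$. If $x_{k-1},x_k\in\mathcal N_\ast$ then $\mathcal I_k\subset\mathcal N_\ast\subseteq\mathcal N_0$, so Lemma \ref{lem:papprox} applies and \eqref{eqn:keyrec} holds with $\mu_k<\delta$; hence $\epsilon_{k+1}\le q\,\epsilon_k<\delta$, i.e. $x_{k+1}\in\mathcal N_\ast$. (Nothing need be checked when $x_k=x_{k-1}$: that forces $w_k=0$, hence $f(x_{k-1})=0$ and $x_{k-1}=c$, so the root has already been located and the iteration terminates.) By induction every later iterate remains in $\mathcal N_\ast$ and $\epsilon_{k+1}\le q\,\epsilon_k$, so $\epsilon_k\to0$; since then $\mu_k\to0$, \eqref{eqn:keyrec} gives $\epsilon_{k+1}/\epsilon_k\le\widetilde C\mu_k\to0$, which is exactly superlinear convergence.

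To also recover the order claimed in the introduction, I would note that $\epsilon_k\le q\,\epsilon_{k-1}<\epsilon_{k-1}$ for all $k$ past the initial pair, so $\mu_k=\epsilon_{k-1}$ and \eqref{eqn:keyrec} sharpens to the secant recursion $\epsilon_{k+1}\le\widetilde C\,\epsilon_k\epsilon_{k-1}$. Setting $u_k=\widetilde C\epsilon_k\in(0,1)$ and $v_k=-\ln u_k>0$ gives $v_{k+1}\ge v_k+v_{k-1}$, so $v_k$ grows at least like the Fibonacci numbers and $v_k\ge c_1\rho^{k}$ for large $k$, where $\rho=(1+\sqrt5)/2$ is the positive root of $\rho^2=\rho+1$. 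Hence $\epsilon_k\le\widetilde C^{-1}\sigma^{\rho^{k}}$ with $\sigma=e^{-c_1}\in(0,1)$, which is convergence of order at least $(1+\sqrt5)/2$.

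The step I expect to be the main obstacle is the cancellation leading to \eqref{eqn:errrec2}: the $\mathcal O(\epsilon_k)$ contributions of $p_k$ and of $w_{k+1}$ must annihilate $e(x_k)$ exactly, and the surviving remainder must be bounded \emph{uniformly} by a constant times $\epsilon_k\mu_k$, with that constant explicit enough in $M_0,M_1,p$ that $\widetilde C\delta<1$ can genuinely be arranged by shrinking $\delta$ — this is what forces $\mathcal N_\ast$ to be strictly smaller than $\mathcal N_1$. Once \eqref{eqn:keyrec} is in hand with such a constant, the self-map induction and the Fibonacci estimate for the order are routine.
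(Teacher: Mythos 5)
Your proposal is correct and follows essentially the same route as the paper: you derive the identical error recursion $e(x_{k+1})=\frac{e(x_k)}{p}\bigl(2e(\eta_k)\frac{g'(\eta_k)}{g(\eta_k)}-e(x_k)\frac{g'(x_k)}{g(x_k)}\bigr)+\mathcal O(e(x_k)e(\eta_k)^2)$ by expanding $w_{k+1}$ geometrically and inserting the $p_k$ expansion from Lemma \ref{lem:papprox}, which is exactly the paper's chain \eqref{eqn:tc-001}--\eqref{eqn:tc-005}. The only difference is that you spell out the choice of $\mathcal N_\ast$, the self-map induction, and the Fibonacci-type order bound, all of which the paper asserts in one sentence (or defers to a remark) rather than proving in detail.
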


\begin{proof}
Suppose $x_k, x_{k-1} \in \mathcal N_1$.
Let $p_k = (x_k - x_{k-1})/(w_{k+1}- w_k)$. Then the error in iterate $x_{k+1}$
satisfies
\begin{align}\label{eqn:tc-001}
e(x_{k+1}) = c -(x_k - p_k w_{k+1}) = e(x_k) + p_kw_{k+1}.
\end{align}
Similarly to the computations of the previous lemma
\[
w_{k+1} = -\frac{f(x_k)}{f'(x_k)} = \frac{e(x_k) g(x_k)}{p g(x_k) - e(x_k)g'(x_k)},
\]
which together with \eqref{eqn:tc-001} shows
\begin{align}\label{eqn:tc-002}
e(x_{k+1}) = e(x_k)\left(1 - \frac{p_k g(x_k)}{pg(x_k) - e(x_k) g'(x_k)} \right)
 = e(x_k)\left(1 - \frac{p_k/p  }{1  - \frac{1}{p}e(x_k) \frac{g'(x_k)}{g(x_k)}} \right).
\end{align}
For $x_k \in \mathcal N_0$ the denominator of \eqref{eqn:tc-002} can be
expanded as a geometric series to obtain
\begin{align}\label{eqn:tc-003}
e(x_{k+1}) 
& = e(x_k)\left(1 - \frac{p_k}{p}
\sum_{j = 0}^{\infty }\left( \frac{1}{p}e(x_k) \frac{g'(x_k)}{g(x_k)} \right)^j
\right).
\end{align}
For $x_k$ and $x_{k-1}$ in $\mathcal N_1 \subset \mathcal N_0$, the results of Lemma 
\ref{lem:papprox} hold, and applying the resulting expansion of $p_k$ 
to \eqref{eqn:tc-003} shows
\begin{align}\label{eqn:tc-004}
e(x_{k+1}) & = 
e(x_k) \left\{ 1- \left(1 - \frac{2}{p} e(\eta_k) \frac{g'(\eta_k)}{g(\eta_k)} 
+ \mathcal O(e(\eta_k)^2)\right) 
\sum_{j = 0}^{\infty }\left( \frac{1}{p}e(x_k) \frac{g'(x_k)}{g(x_k)} \right)^j 
 \right\}
\nonumber \\& = 
e(x_k) \left\{ 1- \left(1 - \frac{2}{p} e(\eta_k) \frac{g'(\eta_k)}{g(\eta_k)} 
+ \mathcal O(e(\eta_k)^2)\right) 
\left( 1 +  \frac{1}{p}e(x_k) \frac{g'(x_k)}{g(x_k)} + 
\left( \frac{1}{p}e(x_k) \frac{g'(x_k)}{g(x_k)} \right)^2
\sum_{j = 0}^{\infty }\left( \frac{1}{p}e(x_k) \frac{g'(x_k)}{g(x_k)} \right)^j 
 \right) \right\}
\nonumber \\& = 
e(x_k) \left\{ 1- \left(1 - \frac{2}{p} e(\eta_k) \frac{g'(\eta_k)}{g(\eta_k)} 
+ \mathcal O(e(\eta_k)^2)\right) 
\left( 1 +  \frac{1}{p}e(x_k) \frac{g'(x_k)}{g(x_k)} + \mathcal{O}(e(x_k)^2) 
 \right) \right\},
\end{align}
for some $\eta_k \in \mathcal I_k$.
Multiplying out terms in \eqref{eqn:tc-004} shows the error satisfies 
\begin{align}\label{eqn:tc-005}
e(x_{k+1})   
 = e(x_k)e(\eta_k) \frac 2 p\frac{g'(\eta_k)}{g(\eta_k)} 
-  e(x_k)^2 \frac1 p \frac{g'(x_k)}{g(x_k)} + \mathcal O (e(x_k) e(\eta_k)^2),
\end{align}
which, for $x_k, x_{k-1}$ in an interval $\mathcal N_\ast \subseteq \mathcal N_1$,
shows the iterates stay in $\mathcal N_\ast$, and converge superlinearly to 
$c$.
\qed
\end{proof}
The standard secant method, when used to approximate a simple root,  
has an order of convergence of  $(1 + \sqrt 5)/2$, and 
the lowest order term in its error expansion is multiple of $e(x_k) e(x_{k-1})$. 
From \eqref{eqn:tc-005}, the lowest order term in the error expansion of 
Newton-Anderson, when approaching a higher-multiplicity root, is a multiple of
$e(x_k) e(\eta_k)$, where $\eta_k$ (from a mean value theorem) 
is between $x_k$ and $x_{k-1}$. This implies
the order of convergence for the method is at least $(1+\sqrt 5)/2$, and generally
less than 2 unless $g'/g \goto 0$. 

\section{Numerical examples}\label{sec:numerics}
In this section, some numerical examples are given to illustrate the efficiency of 
the Newton-Anderson rootfinding method.
In these examples, the proposed method, Algorithm \ref{alg:na1}, 
is compared with the Newton method \eqref{eqn:newton},
the modified Newton method \eqref{eqn:modnewton} (assuming {\em a priori} knowledge
of the multiplicity $p$ of the zero), and the adaptive method of 
\cite[Section 6.6.2]{Quarteroni06}, implemented as described therein.
Additionally, results are shown for the secant method (using $x_0$ as stated, and 
$x_{-1} = x_0 - 10^{-3}$), and the predictor-corrector (PC) 
Newton method of \cite{McWo14}.
The secant method is included because, as shown in \eqref{eqn:seclike},
scalar Newton-Anderson can be interpreted as a secant method applied to the Newton
update step, or a secant method to find the zero of $w(x) = -f(x)/f'(x)$. 
The predictor-corrector method (which was designed to accelerate
Newton's method for simple roots only) comparatively demonstrates the robustness
of Newton-Anderson, which performs comparably in each case tested. In contrast, the
predictor-corrector method outperforms the Newton and secant methods in the first 
example, and is outperformed by both of them in the second.

\subsubsection{Example 1}\label{subsubsec:exQ}
The first example is taken from \cite[Example 6.11]{Quarteroni06}.
The problem tested is finding the zero of $f(x) = (x^2-1)^q \log x$, which has
a zero of multiplicity $p = q+1$ at $x=1$. 
The condition to exit the iterations are those from 
\cite[Example 6.11]{Quarteroni06}, namely 
$|x_{k+1}- x_k| < 10^{-10}$. The iteration counts starting from  $x_0 = 0.8$ 
(for standard, adaptive and modifed Newton methods) 
agree with those stated in \cite{Quarteroni06}. 
Tables \ref{tab:Q6.11-2}-\ref{tab:Q6.11-6} show the respective iteration counts for 
$q = \{2, 6\}$ for each method starting from initial iterates $x_0 = \{0.8, 2, 10\}$.
The final value of $p_k$ is shown in parentheses after the iteration count for the 
Newton-Anderson and adaptive Newton methods. 

Consistent with the analysis from 
Lemma \ref{lem:papprox} and Theorem \ref{thm:na1con}, the performance of the
Newton-Anderson method is linked to its accurate approximation of the root's 
multiplicity. For the result below in Tables \ref{tab:Q6.11-2}-\ref{tab:Q6.11-6},
the final value of $p_k$ in Newton-Anderson was accurate to $\bigo(10^{-8})$, 
except for the last experiment in Table \ref{tab:Q6.11-2}, 
where it was $\bigo(10^{-7})$. 
\begin{table}[ht!]
\centering
{\renewcommand{\arraystretch}{1.1}
\begin{tabular}{|r||c|c|c||c|c|c|}
\hline
$x_0$ & modified N. & N. Anderson & adaptive N. & Newton & P.C. Newton & secant
 \\ [2pt]
\hline
 0.8 & 4 & 6 (3.0000) & 13 (2.9860) & 51 & 38 & 72\\
 2.0 & 5 & 7 (3.0000) & 17 (3.0178) & 56 & 40 & 79\\
10.0 & 7 & 8 (3.0000) & 30 (4.1984) & 63 & 46 & 89\\ 
\hline
\end{tabular}
\caption{Iterations to $|x_{k+1}- x_k| < 10^{-10}$ for $f(x) = (x^2-1)^2\log x$}
\label{tab:Q6.11-2}
}
\end{table}
\begin{table}[ht!]
\centering
{\renewcommand{\arraystretch}{1.1}
\begin{tabular}{|r||c|c|c||c|c|c|}
\hline
$x_0$ & modified N. & N. Anderson & adaptive N. & Newton & P.C. Newton & secant
 \\ [2pt]
\hline\hline
 0.8 & 5 &  7 (7.0000) & 18 (6.7792)  & 127  & 106 & 179 \\
 2.0 & 6 &  8 (7.0000) & 29 (7.3274)  & 140  & 110 & 198 \\
10.0 & 8 & 10 (7.0000) & 80 (12.1095) & 162  & 114 & 229 \\ 
\hline
\end{tabular}
\caption{Iterations to $|x_{k+1}- x_k| < 10^{-10}$ for $f(x) = (x^2-1)^6\log x$}
\label{tab:Q6.11-6}
}
\end{table}

\subsubsection{Example 2}\label{subsubsec:exexp}
The second example concerns finding the zero of 
$f(x) = (x-2)^6\exp(-(x-2)^2/2)$, which has
a zero of multiplicity 6 at $x=2$.
The first 30 differences between consecutive iterates $|x_{k+1}-x_k|$ are shown 
below in Figure \ref{fig:exptest6} starting each iteration from the initial
$x_0 = \{0, 1\}$. 

\begin{figure}[ht!]
\centering
\includegraphics[trim = 0pt 4pt 10pt 20pt,clip = true, width=0.40\textwidth]
{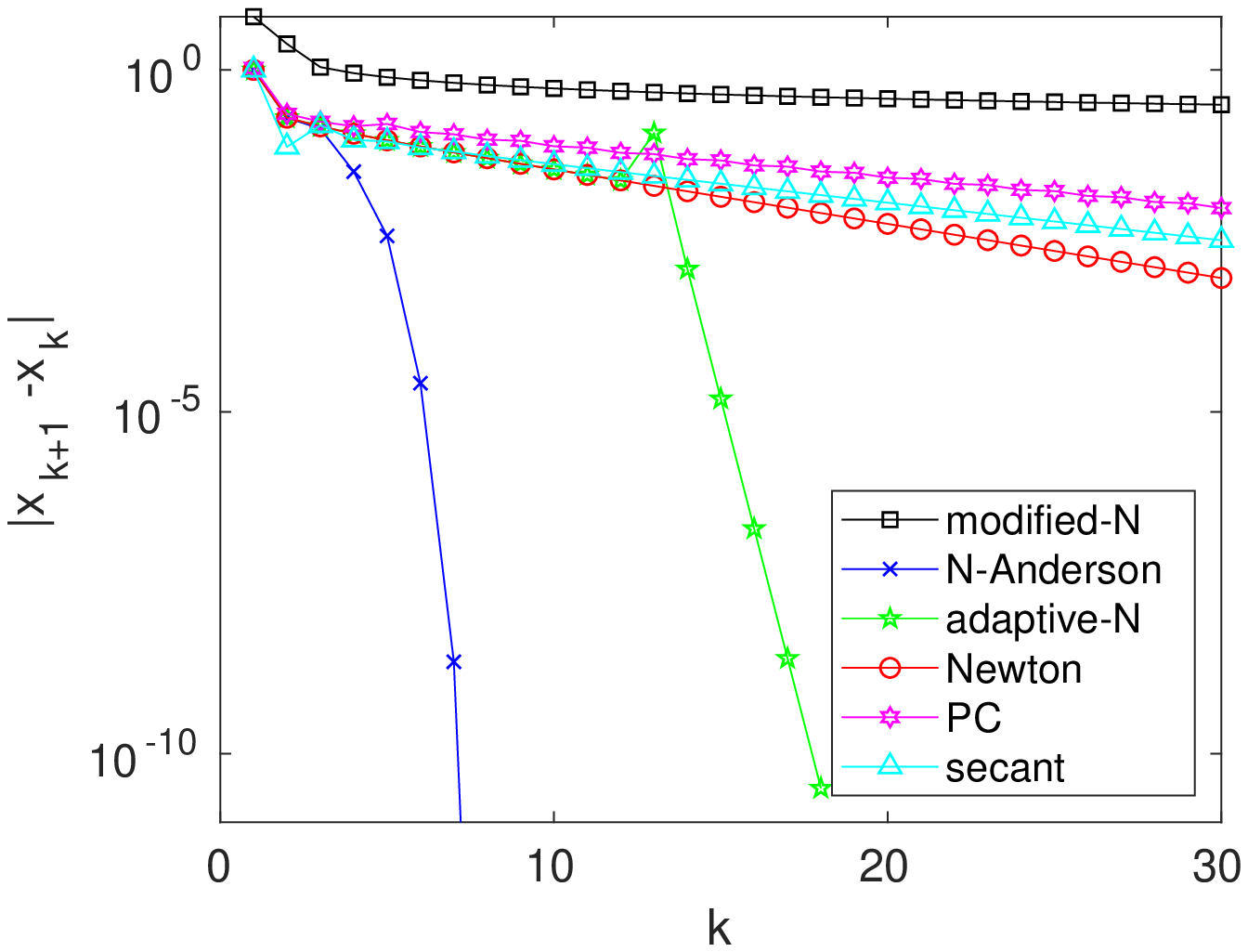}~\hfil~
\includegraphics[trim = 0pt 4pt 10pt 20pt,clip = true, width=0.40\textwidth]
{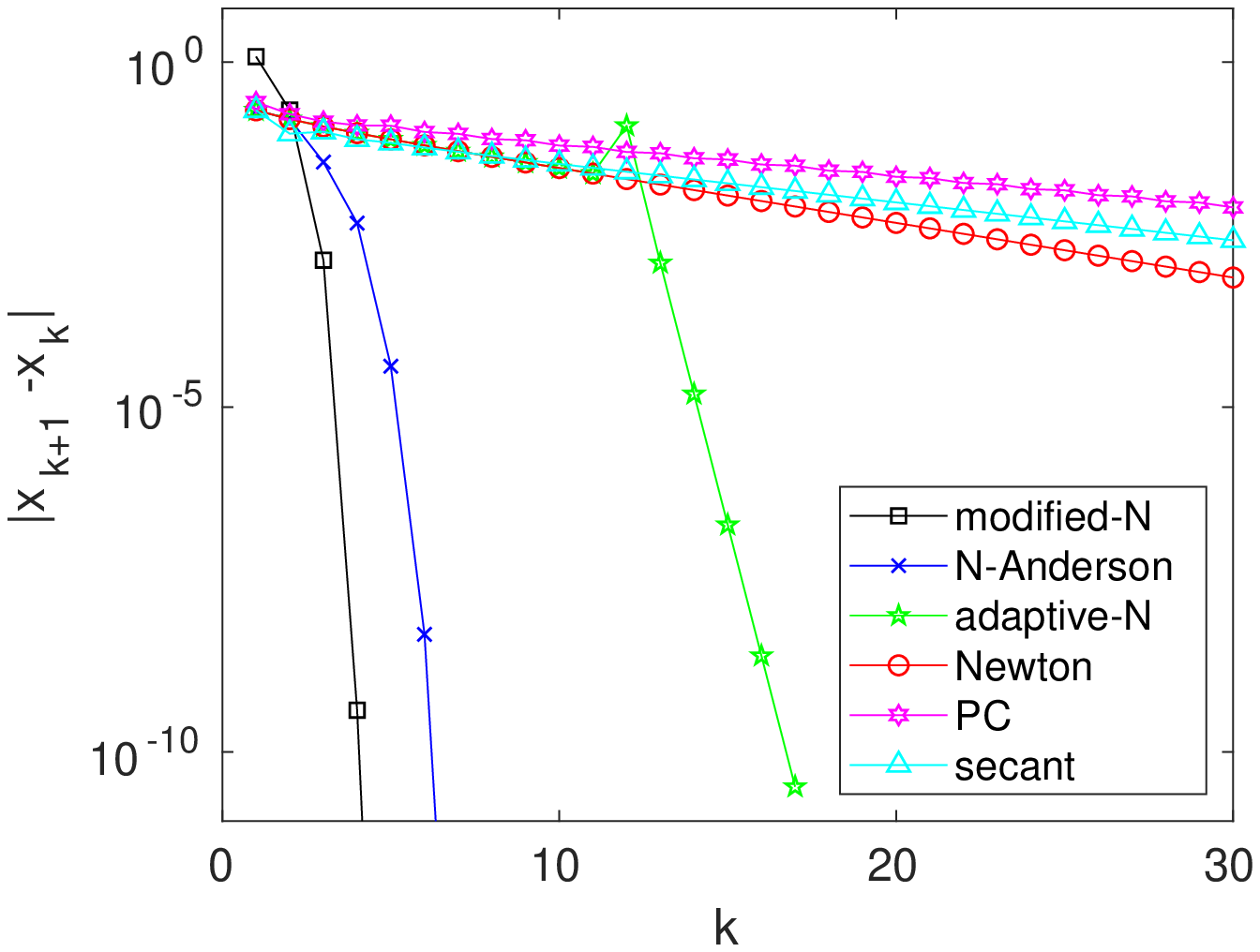}
\caption{$|x_{k+1}- x_k|$ for $f(x) = (x-2)^6(-\exp(x-2)^2/2)$. Left: 
iteration starting with $x_0 = 0$. Right: iteration starting with $x_0 = 1$.
\label{fig:exptest6}
}
\end{figure}

For this problem the modified Newton method
converges in two fewer iterations than Newton-Anderson (12 less than
the adaptive method) starting from $x_0 = 1$, however 
it fails to converge starting from $x_0 = 0$ (it is attracted to the asymptotic zero
as $x \rightarrow \infty$). The Newton-Anderson method has
comparable performance and converges to the same zero in both cases, as do the
remaining methods, although their convergence is substantially slower.

\subsubsection{Numerical order of convergence}
To demonstrate the order of convergence of Newton-Anderson for some instances of
each problem,
the sequence of approximate convergence orders $q_k = \log|x_k -c|/\log|x_{k-1}-c|$
is shown below in Table \ref{tab:qk}. 
\begin{table}[ht!]
\centering
{\renewcommand{\arraystretch}{1.1}
\begin{tabular}{|r||c||c|c|c|c|c|c|c|}
\hline
$f(x)$ & $x_0$ & $q_3$ & $q_4$ & $q_5$ & $q_6$ & $q_7$ & $q_8$ & $q_9$
 \\ [2pt]
\hline\hline
$(x^2-1)^2\log(x)$ & 10 & 2.1027 & 2.4792 & 1.8078 & 1.7729 & 1.6879 &        & \\
$(x^2-1)^4\log(x)$ & 10 & 1.4009 & 3.2797 & 1.9022 & 1.7976 & 1.7032 & 1.6737 & \\
$(x^2-1)^6\log(x)$ & 10 & 0.7489& 5.3648 & 2.0311 & 1.8145 & 1.7161 & 1.6802 &1.6531\\
\hline
$(x-2)^6\exp(-(x-2)^2/2)$  & 0 & 5.6924 & 2.3193 & 2.3055 & 2.0713 &         & &\\
$(x-2)^8\exp(-(x-2)^2/2)$  & 0 & 21.221 & 2.9109 & 2.3625 & 2.1500 &  2.0728 & &\\
$(x-2)^{10}\exp(-(x-2)^2/2)$ & 0 & 12.3187 & 3.0433 & 2.3309 & 2.1592 & 2.0688 & &\\ 
\hline
\end{tabular}
\caption {Approximate orders of convergence $q_k = \log|x_k -c|/\log|x_{k_1}-c|$
          for Examples \ref{subsubsec:exQ} and \ref{subsubsec:exexp}.}
\label{tab:qk} 
}
\end{table}
The convergence orders of the first example behave essentially as predicted, generally
staying in the range $((1 + \sqrt 5)/2,2)$, whereas the approximate convergence orders
from the second example are generally above 2.  This can be easily understood however
as the constant in front of the lowest order term of \eqref{eqn:tc-005} is a multiple
of $g'(\eta_k)/g(\eta_k)$, which for $g(x) =\exp(-(x-2)^2/2)$, goes to 
zero as $x$ approaches $c=2$. 

\section{Conclusion}\label{sec:conclusion}
The purpose of this discussion is to understand the iteration derived from
applying Anderson acceleration to a scalar Newton iteration.
The resulting Newton-Anderson rootfinding method is shown 
to approximate a modified Newton method that yields quadratic convergence 
to non-simple roots.  The presented method does not require {\em a priori}
knowledge of the multiplicity of the root, nor does it require additional
function evaluations or the computation of additional derivatives.
The convergence analysis of the Newton-Anderson method 
demonstrates a local order of convergence of at least $(1 + \sqrt 5)/2$. 
The numerical examples demonstrate this and show
iteration counts close to that of the modified Newton method, and with 
less sensitivity to the initial guess. In comparison with the adaptive Newton 
method of \cite{Quarteroni06} designed to accomplish the same task, the 
implementation is simpler as additional heuristics are not involved, and on
the examples tested, convergence is faster as a more accurate approximation
of the root's multiplicity is attained. Altogether this makes the 
Newton-Anderson rootfinding method worthy of consideration in 
situations involving non-simple roots. 
\section*{Acknowledgements} The author was partially supported by NSF DMS 1852876.

\bibliographystyle{elsarticle-num}
\bibliography{ander}


\end{document}